\definecolor{wineRed}{rgb}{0.7,0,0.3}
\definecolor{grandBleu}{rgb}{0,0,0.8}
\definecolor{darkGreen}{rgb}{0,0.4,0}
\definecolor{blueViolet}{rgb}{0.4,0,1.0}
\definecolor{bloodOrange}{rgb}{0.85,0.05,0}
\definecolor{mycolor}{rgb}{0.8,0,0.2}
\definecolor{}{rgb}{0.8,0,0.2}
\DeclareMathAlphabet{\mathpzc}{OT1}{pzc}{m}{it}
\numberwithin{equation}{section}
\theoremstyle{plain}
\newtheorem{theorem}{Theorem}[section]
\newtheorem{lemma}[theorem]{Lemma}
\theoremstyle{definition}
\newtheorem{definition}[theorem]{Definition}
\newtheorem{remark}{Remark}
\newtheorem{ex}{Example}
\def\Sgn{\mathop{\mathrm{Sgn}}\nolimits}
\def\ds{\displaystyle}
\begin{document}
\thispagestyle{plain}
\begin{center}
    \textbf{\Large Subdifferential decomposition of 1D-regularized total variation with nonhomogeneous coefficients}\footnotemark[1]
\end{center}
    \bigskip
%%
%\dedication{A line for dedication}
%%
\vspace{-0.5ex}
\begin{center}
    \textsc{Shodai Kubota}
    %\footnotemark[3]
    \\[1ex]
    {Department of Mathematics and Informatics, \\ Graduate School of Science and Engineering, Chiba University, \\ 1-33, Yayoi-cho, Inage-ku, 263-8522, Chiba, Japan}
    \\[0ex]
    ({\ttfamily skubota@chiba-u.jp})
\end{center}
\vspace{-1ex}

\footnotetext[1]{
AMS Subject Classification: 
                    35J62, 46G05, 47H04.
\\[1ex]
Keywords: subdifferential decomposition, nonhomogeneous coefficients, quasilinear equation with singularity.
}
\bigskip

\noindent
{\bf Abstract.} In this paper, we consider a convex function defined as a 1D-regularized total variation with nonhomogeneous coefficients, and prove the Main Theorem concerned with the decomposition of the subdifferential of this convex function to a weighted singular diffusion and a linear regular diffusion. The Main Theorem will be to enhance the previous regularity result for quasilinear equation with singularity, and moreover, it will be to provide some useful information in the advanced mathematical studies of grain boundary motion, based on KWC type energy.

\newpage

\section{Introduction}
    Let $\Omega := (-L, L) \subset \mathbb{R}$ be a one-dimensional spatial domain with a constant $0 < L < \infty$, and let us define $H := L^2(\Omega)$ and $V := H^1(\Omega)$. Let $ 0 \leq \alpha \in V $ and $ 0 < \beta \in V $ be fixed functions. 

In this paper, we consider the following convex function on $H$:
\begin{align}\label{Phi}
    \theta \in H \mapsto \Phi_{\alpha, \beta}(\theta) := V_\alpha(\theta) + W_\beta(\theta);
\end{align}
which is defined as a sum of two convex functions on $H$, defined as follows:
%\vspace{-1ex}
    \begin{align}\label{V}
    \displaystyle \theta \in H \mapsto V_\alpha(\theta) := 
    %\left\{
%\begin{array}{l}
%    \displaystyle \int_{\Omega} \alpha |D \theta| \, \mbox{ if } \theta \in BV(\Omega), \\[5mm]
%\infty, \mbox{ otherwise.}
%\end{array}
%\right.
%    \inf \left\{ \begin{array}{l|l}
%        \displaystyle \varliminf_{i \to \infty} \int_\Omega \alpha |\partial_x \vartheta_i| \, dx & \parbox{3.25cm}{$ \{ \vartheta_i \}_{i = 1}^\infty \subset W^{1, 1}(\Omega) $, and $ \vartheta_i \to \theta $ in $ H $, as $ i \to \infty $}
%    \end{array} \right\},
\sup \left\{ \begin{array}{l|l}
        \displaystyle \int_\Omega \theta \partial_x \varphi \, dx, & \parbox{4cm}{$ \varphi \in V \cap C_\mathrm{c}(\Omega) $, such that $ |\varphi| \leq \alpha $ on $ \overline{\Omega} $}
    \end{array} \right\},
    \end{align}
and
%\vspace{-1ex}
\begin{align}\label{W}
    \displaystyle \theta \in H \mapsto W_\beta(\theta) := \left\{
\begin{array}{l}
\displaystyle \frac{1}{2}\int_{\Omega} \beta |\partial_x \theta|^2 dx, \, \mbox{ if } \theta \in V, \\[1ex]
\infty, \mbox{ otherwise.}
\end{array}
\right.
\end{align}
%with fixed nonhomogeneous coefficients $ 0 \leq \alpha \in V $ and $ 0 < \beta \in V $. 
    The functional $ V_\alpha $, defined in \eqref{V}, is a kind of generalized total variation, so that the functional $ \Phi_{\alpha, \beta} $, defined in \eqref{Phi}, can be called a \emph{regularized total variation} with nonhomogeneous coefficients $ \alpha $ and $ \beta $.

On this basis, we set the goal to prove the following Main Theorem. %, which is concerned with the following decomposition of the subdifferential $\partial \Phi$ of $\Phi$: 
\bigskip

\noindent
    \textbf{Main Theorem \textmd{(Decomposition of the subdifferential)}.} The subdifferential $ \partial \Phi_{\alpha, \beta} $ of the convex function $ \Phi_{\alpha, \beta} \subset H \times H $ is decomposed as follows:
\begin{align}\label{target}
    \partial \Phi_{\alpha, \beta} = \partial V_\alpha + \partial W_\beta \mbox{ in } H \times H,
\end{align}
i.e. $\partial \Phi_{\alpha, \beta}$ is represented as the sum the subdifferentials $ \partial V_\alpha \subset H \times H $ and $ \partial W_\beta \subset H \times H $ of the respective convex functions $ V_\alpha $ and $ W_\beta $. 
\bigskip

The equation \eqref{target} leads to the $ H^2 $-regularity of the following nonhomogeneous quasilinear equation with singularity:
\begin{equation}\label{elp01}
    \begin{cases}
        \displaystyle
        -\partial_x \left( \alpha(x) \frac{D \theta}{|D \theta|} +\beta(x) \partial_x \theta \right) = \theta^* \mbox{ with $ \theta^* \in H $,}
        \\[1.5ex]
        \mbox{subject to the zero-Neumann type boundary condition.}
    \end{cases}
\end{equation}
When the both $ \alpha $ and $ \beta $ are homogeneous (constants), we can obtain the $ H^2 $-regularity by using the mathematical method, developed in \cite{MR3144071}, which is based on the general theory of PDEs (e.g. \cite{MR0241822}). However, when $ \alpha $ and $ \beta $ are nonhomogeneous, the extra error terms brought by $ \alpha $ and $ \beta $ make it difficult to see $ \theta \in H^2(\Omega) $ in \eqref{elp01}, by referring to the existing method. Hence, it can be said that our Main Theorem will be to enhance the previous method of \cite{MR3144071}, and moreover, to report another variational approach based on the subdifferential. 

In the meantime, the Main Theorem is motivated by the mathematical analysis of grain boundary motion, studied in \cite{MR3362773,MR4177183}, and especially, the convex function $ \Phi_{\alpha, \beta} $ is based on the \emph{KWC energy}, proposed by Kobayashi--Warren--Carter \cite{MR1752970}. In this context, the variable $ \theta $ is the order parameter of crystalline orientation, and the nonhomogeneous coefficients $ \alpha $ and $ \beta $ are associated with another order parameter, such as the orientation order of grain in a polycrystal. In this light, our Main Theorem can be expected to provide useful information for some advanced problems that require smoothness of the system while including singularity, such as the optimal control problem governed by the KWC type model. 

The proof of Main Theorem is divided in three Sections. In the next Section 2, we prepare notations and mathematical theories as the preliminaries. Additionally, in Section 3, we prove an auxiliary lemma associated with the approximating approach to the Main Theorem. Based on these, the final Section 4 is devoted to the proof of our Main Theorem.

\section{Preliminaries} 

We begin by prescribing the assumptions and notations used throughout this paper. 
\medskip

\noindent
\underline{\textbf{\textit{Assumptions.}}} 
Throughout this paper, let $\Omega := (-L, L) \subset \mathbb{R}$ be a fixed spatial bounded domain with a constant $ 0 < L < \infty $, and let $\Gamma := \partial \Omega = \{-L, L\}$ be the boundary of $\Omega$. Also, let $ \partial_x $ be the distributional spatial differential. On this basis, we define  
\begin{align*} 
    H := L^2(\Omega), &\, H_\Gamma := \left\{ \begin{array}{l|l}
            \tilde{z} & \tilde{z} : \Gamma \longrightarrow \mathbb{R}
        \end{array} \right\} ~ (\sim \mathbb{R}^2), \mbox{ and }  V := H^1(\Omega) ~(\subset C(\overline{\Omega})).
\end{align*} 

Let $ \alpha \in V $ and $ \beta \in V $ be fixed functions, such that:
\begin{equation}\label{ass01}
    \min \alpha(\overline{\Omega}) \geq 0, \mbox{ and } \min \beta(\overline{\Omega}) > 0. 
\end{equation}
\medskip

\noindent
\underline{\textbf{\textit{Abstract notations.}}}
For an abstract Banach space $ X $, we denote by $ |\cdot|_{X} $ the norm of $ X $. Let $I_X : X \longrightarrow X $ be the identity map from $X$ onto $X$. In particular, when $ X $ is a Hilbert space, we denote by $ (\cdot,\cdot)_{X} $ the inner product of $ X $. 

For any subset $ A $ of a Banach space $ X $, let $ \chi_A : X \longrightarrow \{0, 1\} $ be the characteristic function of $ A $, i.e.:
    \begin{equation*}
        \chi_A: w \in X \mapsto \chi_A(w) := \begin{cases}
            1, \mbox{ if $ w \in A $,}
            \\[0.5ex]
            0, \mbox{ otherwise.}
        \end{cases}
    \end{equation*}
\medskip

\noindent
\underline{\textbf{\textit{Notations in convex analysis. (cf. \cite[Chapter II]{MR0348562})}}} 
    Let $ X $ be an abstract Hilbert space $ X $. For a proper, lower semi-continuous (l.s.c.), and convex function $ \Psi : X \longrightarrow (-\infty, \infty] $ on a Hilbert space $ X $, we denote by $ D(\Psi) $ the effective domain of $ \Psi $. Also, we denote by $\partial \Psi$ the subdifferential of $\Psi$. The subdifferential $ \partial \Psi $ corresponds to a weak differential of convex function $ \Psi $, and it is known as a maximal monotone graph in the product space $ X \times X $. The set $ D(\partial \Psi) := \bigl\{ z \in X \ |\ \partial \Psi(z) \neq \emptyset \bigr\} $ is called the domain of $ \partial \Psi $. We often use the notation ``$ [z_{0}, z_{0}^{*}] \in \partial \Psi $ in $ X \times X $\,'', to mean that ``$ z_{0}^{*} \in \partial \Psi(z_{0})$ in $ X $ for $ z_{0} \in D(\partial\Psi) $'', by identifying the operator $ \partial \Psi $ with its graph in $ X \times X $.
\medskip

\begin{ex}[Examples of the subdifferential]\label{exConvex}
   For any $ \varepsilon \geq 0 $, let $ f^\varepsilon : \mathbb{R} \longrightarrow [0, \infty) $ be a continuous and convex function, defined as follows:
    \begin{equation}\label{f_eps}
        f^\varepsilon : y \in \mathbb{R} \mapsto f^\varepsilon(y) := \sqrt{\varepsilon^2 +|y|^2} \in [0, \infty).
    \end{equation}

    When $ \varepsilon > 0 $, $ f^\varepsilon \in C^\infty(\mathbb{R}) $, and hence the subdifferential $ \partial f^\varepsilon \subset \mathbb{R} \times \mathbb{R} $ coincides with the single-valued function of the standard differential $ (f^\varepsilon)' \in L^\infty(\mathbb{R}) $, i.e.: 
    \begin{equation*}
        D(\partial f^\varepsilon) = \mathbb{R}, \mbox{ and } \partial f^\varepsilon(y) = (f^\varepsilon)'(y) = \frac{y}{\sqrt{\varepsilon^2 +|y|^2}}, ~ \mbox{for any $  y \in \mathbb{R} $.}
    \end{equation*}

    Meanwhile, when $ \varepsilon = 0 $, the corresponding function $ f^0 $ coincides with the function of absolute value $ |\cdot| : \mathbb{R} \longrightarrow [0, \infty) $. Hence, the subdifferential $ \partial f^0 $  of this case coincides with the set-valued signal function $ \Sgn: \mathbb{R} \longrightarrow 2^{\mathbb{R}} $, which is defined as follows:
\begin{align}\label{Sgn^d}
    & \xi \in \mathbb{R} \mapsto \Sgn(\xi) := \left\{ \begin{array}{ll}
            \multicolumn{2}{l}{
                \ds \frac{\xi}{|\xi|}, \mbox{ if $ \xi \ne 0 $,}}
                    \\[3ex]
            [-1, 1], & \mbox{otherwise,}
        \end{array} \right.
    \end{align}
%Indeed, the set-valued function $ \Sgn $ coincides with the subdifferential of the function of absolute value $ |{}\cdot{}| : \xi \in \mathbb{R} \mapsto |\xi| \in [0, \infty) $, 
i.e.:
\begin{equation*}
    D(\partial f^0) = D(\partial |\cdot|) = \mathbb{R}, \mbox{ and } \partial f^0(y) = \partial |{}\cdot{}|(y) = \Sgn(y), \mbox{ for any $ y \in \mathbb{R} $.}
\end{equation*}
\end{ex}
\medskip

Next, we mention about a notion of functional convergence, known as ``Mosco-convergence''. 
 
\begin{definition}[Mosco-convergence: cf. \cite{MR0298508}]\label{Def.Mosco}
    Let $ X $ be an abstract Hilbert space. Let $ \Psi : X \longrightarrow (-\infty, \infty] $ be a proper, l.s.c., and convex function, and let $ \{ \Psi_n \}_{n = 1}^\infty $ be a sequence of proper, l.s.c., and convex functions $ \Psi_n : X \longrightarrow (-\infty, \infty] $, $ n = 1, 2, 3, \dots $. Then, it is said that $ \Psi_n \to \Psi $ on $ X $, in the sense of Mosco, as $ n \to \infty $, iff. the following two conditions are fulfilled.
\begin{description}
    \item[(M1) The condition of lower-bound:]$ \ds \varliminf_{n \to \infty} \Psi_n(\check{w}_n) \geq \Psi(\check{w}) $, if $ \check{w} \in X $, $ \{ \check{w}_n  \}_{n = 1}^\infty \subset X $, and $ \check{w}_n \to \check{w} $ weakly in $ X $, as $ n \to \infty $. 
    \item[(M2) The condition of optimality:]for any $ \hat{w} \in D(\Psi) $, there exists a sequence $ \{ \hat{w}_n \}_{n = 1}^\infty  \subset X $ such that $ \hat{w}_n \to \hat{w} $ in $ X $ and $ \Psi_n(\hat{w}_n) \to \Psi(\hat{w}) $, as $ n \to \infty $.
\end{description}
\end{definition}

\begin{remark}\label{Rem.MG}
    Let $ X $, $ \Psi $, and $ \{ \Psi_n \}_{n = 1}^\infty $ be as in Definition~\ref{Def.Mosco}. Then, the following facts hold.
\begin{description}
    \item[(Fact\,1)](cf. \cite[Theorem 3.66]{MR0773850}) Let us assume that
    \begin{equation*}%\label{Mosco01}
    \Psi_n \to \Psi \mbox{ on $ X $, in the sense of  Mosco, as $ n \to \infty $,}
    \vspace{-1ex}
\end{equation*}
and
\begin{equation*}
\left\{ ~ \parbox{10cm}{
$ [w, w^*] \in X \times X $, ~ $ [w_n, w_n^*] \in \partial \Psi_n $ in $ X \times X $, $ n \in \mathbb{N} $,
\\[1ex]
$ w_n \to w $ in $ X $, and $ w_n^* \to w^* $ weakly in $ X $, as $ n \to \infty $.
} \right.
\end{equation*}
Then, it holds that:
\begin{equation*}
[w, w^*] \in \partial \Psi \mbox{ in $ X \times X $, and } \Psi_n(w_n) \to \Psi(w) \mbox{, as $ n \to \infty $.}
\end{equation*}
    \item[(Fact\,2)](cf. \cite[Lemma 4.1]{MR3661429} and \cite[Appendix]{MR2096945}) Let $ N \in \mathbb{N} $ denote a constant of dimension, and let $  S \subset \mathbb{R}^N $ be a bounded open set. Then, under the assumptions and notations as in (Fact\,1), a sequence $ \{ \widehat{\Psi}_n^S \}_{n = 1}^\infty $ of proper, l.s.c., and convex functions on $ L^2(S; X) $, defined as:
        \begin{equation*}
            z \in L^2(S; X) \mapsto \widehat{\Psi}_n^S(z) := \left\{ \begin{array}{ll}
                    \multicolumn{2}{l}{\ds \int_S \Psi_n(z(y)) \, dt,}
                    \\[1ex]
                    & \mbox{ if $ \Psi_n(z) \in L^1(S) $,}
                    \\[1.0ex]
                    \infty, & \mbox{ otherwise,}
                \end{array} \right. \mbox{for $ n = 1, 2, 3, \dots $;}
        \end{equation*}
        converges to a proper, l.s.c., and convex function $ \widehat{\Psi}^S $ on $ L^2(S; X) $, defined as:
        \begin{equation*}
            z \in L^2(S; X) \mapsto \widehat{\Psi}^S(z) := \left\{ \begin{array}{ll}
                    \multicolumn{2}{l}{\ds \int_S \Psi(z(y)) \, dt, \mbox{ if $ \Psi(z) \in L^1(S) $,}}
                    \\[2ex]
                    \infty, & \mbox{ otherwise;}
                \end{array} \right. 
        \end{equation*}
        on $ L^2(S; X) $, in the sense of Mosco, as $ n \to \infty $. 
\end{description}
\end{remark}
\begin{ex}[Example of Mosco-convergence]\label{Rem.ExMG}
    Let $ \{ f^\varepsilon \}_{\varepsilon \geq 0} \subset C(\mathbb{R}) $ be the sequence of nonexpansive convex functions, as in \eqref{f_eps}. Then, for any $\varepsilon_0 \geq 0, f^\varepsilon \to f^{\varepsilon_0}, \mbox{ uniformly on } \mathbb{R}, \mbox{ as } \varepsilon \to \varepsilon_0$, so that:
    \begin{equation*}
        f^\varepsilon \to f^{\varepsilon_0} \mbox{ on $ \mathbb{R} $, in the sense of Mosco, as $ \varepsilon \to \varepsilon_0 $.}
    \end{equation*}
\end{ex}
%\begin{corollary}[Example of Mosco-convergence]\label{Cor.ExMG}
%    Let us fix $x \in \Omega$. Then, we immediately calculate that:
%    \begin{equation*}
%        \alpha(x) f^\varepsilon \to \alpha(x) f^{\varepsilon_0} \mbox{ on $ \mathbb{R} $, in the sense of Mosco, as $ \varepsilon \to \varepsilon_0 $, for any $ \varepsilon_0 \geq 0 $.}
%    \end{equation*}
%\end{corollary}

\noindent
\underline{\textbf{\textit{Basic and specific notations.}}} 
~For arbitrary $ r_0 $, $ s_0 \in [-\infty, \infty]$, we define:
\begin{equation*}
r_0 \vee s_0 := \max\{r_0, s_0 \}\ \mbox{and}\ r_0 \wedge s_0 := \min\{r_0, s_0 \},
\end{equation*}
and in particular, we set:
\begin{equation*}
    [r]^+ := r \vee 0 \ \mbox{and}\ [r]^- :=  -(r \wedge 0), \mbox{ for any $ r \in \mathbb{R} $.}
\end{equation*}

Finally, we remark on the specific functionals $ V_\alpha : H \longrightarrow [0, \infty] $, $ W_\beta : H \longrightarrow [0, \infty] $, and $\Phi_{\alpha, \beta} : H \longrightarrow [0, \infty]$, that are defined in \eqref{V}, \eqref{W}, and \eqref{Phi}, respectively. 

\begin{remark}(cf. \cite{MR1857292,MR1736243})\label{Rem.V_alp}
The functional $ V_\alpha $ coincides with the so-called \emph{lower semi-continuous envelope} of the following convex function:
\begin{equation*}
    \theta \in W^{1, 1}(\Omega) \mapsto \widetilde{V}_{\alpha}(\theta) := \int_\Omega \alpha |\partial_x \theta| \, dx \in [0, \infty), 
\end{equation*}
more precisely,
%
%Also, this functional has another expression, as \emph{weighted total variation,} which is based on the general theory, developed in \cite{MR1259102,MR1857292,MR1736243}:
\begin{align}\label{V_another}
    & V_\alpha(\theta) = 
\inf \left\{ \begin{array}{l|l}
    \displaystyle \varliminf_{i \to \infty} \widetilde{V}_\alpha(\tilde{\vartheta}_i) & \parbox{4cm}{$ \{ \tilde{\vartheta}_i \}_{i = 1}^\infty \subset W^{1, 1}(\Omega) $, and $ \tilde{\vartheta}_i \to \theta $ in $ H $, as $ i \to \infty $}
\end{array} \right\}, 
    %\sup \left\{ \begin{array}{l|l}
    %    \displaystyle \int_\Omega \theta \partial_x \varphi \, dx & \parbox{4cm}{$ \varphi \in V \cap C_\mathrm{c}(\Omega) $, such that $ |\varphi| \leq \alpha $ on $ \overline{\Omega} $}
    %\end{array} \right\}, 
    \\
    & \hspace{20ex}\mbox{for any $ \theta \in H $.}
    \nonumber
\end{align}
In the light of \eqref{V} and \eqref{V_another}, we can verify the following facts. 
\begin{description}
    \item[(Fact\,3)]$ V_\alpha $ is a proper, l.s.c., and convex function on $ H $, such that:
        \begin{itemize}
            \item the restriction $ V_\alpha|_{W^{1, 1}(\Omega)} $ coincides with $ \widetilde{V}_\alpha $;
            \item $ D(V_\alpha) \supset BV(\Omega) $, and $ D(V_\alpha) = BV(\Omega) $ if $ \min\alpha(\overline{\Omega}) > 0 $. 
        \end{itemize}
    \item[(Fact\,4)]For any $ \theta \in D(V_\alpha) $, there exists $ \{ \vartheta_i \}_{i= 1}^\infty \subset W^{1, 1}(\Omega) $ such that $ \vartheta_i \to \theta $ in $ H $, and $ \widetilde{V}_{\alpha}(\vartheta_i) \to V_\alpha(\theta) $, as $ i  \to \infty $.
\end{description}
\end{remark}
\begin{remark}\label{Rem.W_bt}
    The functional $ W_\beta $ is a proper, l.s.c., and convex function on $ H $, such that $ D(W_\beta) = V $. Moreover, the subdifferential $ \partial W_\beta \subset H \times H $ is a single valued operator, such that
    \begin{equation*}
        [\theta, \theta^*] \in \partial W_\beta \mbox{ in $ H \times H $, iff. $ \beta \partial_x \theta \in H_0^1(\Omega) $, and $ \theta^* = -\partial_x (\beta \partial_x \theta) $ in $ H $.}
    \end{equation*}
\end{remark}
\begin{remark}\label{Rem.Phi_eps}
 Let us fix $\varepsilon \geq 0$ and let $\Phi_{\alpha, \beta}^\varepsilon$ be a function on $H$, defined as follows: 
 \begin{align}\label{phi.e}
\displaystyle \Phi_{\alpha, \beta}^\varepsilon(\theta) := \left\{
\begin{array}{l}
\displaystyle \int_\Omega \alpha \sqrt{\varepsilon^2 + |\partial_x \theta|^2} \, dx+\frac{1}{2}\int_{\Omega} \beta |\partial_x \theta|^2 dx, \, \mbox{ if } \theta \in V, \\[2ex]
\infty, \mbox{ otherwise.}
\end{array}
\right.
\end{align}
Under the assumption \eqref{ass01}, the functions $\Phi^\varepsilon_{\alpha, \beta}$, for $\varepsilon \geq 0$, are proper, l.s.c., and convex on $H$. Especially, when $\varepsilon = 0$, the corresponding functional $\Phi^0_{\alpha, \beta}$ coincides with the convex function $\Phi_{\alpha, \beta}$, defined in \eqref{Phi}. 
\end{remark}
\begin{remark}\label{lem.op.ap}
Let us fix any $\varepsilon > 0$, and let us define a map $\mathcal{A}^\varepsilon :D(\mathcal{A}^\varepsilon) \subset H \longrightarrow H$, by putting: 
\begin{equation*}%\label{A.eps}
 D(\mathcal{A}^\varepsilon) := \left\{ \theta \in V ~\Bigl|~ \alpha (f^\varepsilon)'(\partial_x \theta) + \beta \partial_x \theta \in H_{0}^{1}(\Omega)\right\},
\end{equation*}
and
\begin{equation*}%\label{A.epsf}
 \theta \in D(\mathcal{A}^\varepsilon) \subset H \mapsto \mathcal{A}^\varepsilon \theta := -\partial_x \bigl( \alpha(f^\varepsilon)'(\partial_x \theta) + \beta \partial_x \theta\bigr).
\end{equation*}
Then, by applying the standard variational technique, we can observe that:
\begin{align*}%\label{op.ap}
\mathcal{A}^\varepsilon = \partial \Phi_{\alpha, \beta}^\varepsilon \mbox{ in } H \times H.
\end{align*}
\end{remark}

\section{Auxiliary lemma}

In this Section, we prove an auxiliary lemma which is associated with the approximating approach to the Main Theorem. 

\begin{lemma}\label{Mosco.psi}
Let $\{\varepsilon_m \}_{m = 1}^\infty \subset (0, \infty)$ be arbitrary sequence such that $\varepsilon_m \to 0$ as $m \to \infty$. Then, for the sequence $\{ \Phi_{\alpha, \beta}^{\varepsilon_m}\}_{m=1}^\infty$, it holds that: 
\begin{align*}
 \Phi_{\alpha, \beta}^{\varepsilon_m} \to \Phi_{\alpha, \beta} \mbox{ on } H, \mbox{ in the sense of Mosco}, \mbox{ as } m \to \infty.
\end{align*}
\end{lemma}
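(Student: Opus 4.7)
My plan is to verify the two conditions (M1) and (M2) of Definition~\ref{Def.Mosco} directly, using the elementary pointwise bounds
\begin{equation*}
|y| \leq \sqrt{\varepsilon^{2} + |y|^{2}} \leq \varepsilon + |y|, \qquad y \in \mathbb{R},\ \varepsilon \geq 0,
\end{equation*}
together with Fact~3 from Remark~\ref{Rem.V_alp}, which identifies $V_{\alpha}$ with $\widetilde{V}_{\alpha}$ on $W^{1,1}(\Omega) \supset V$. I would also note at the outset that, because $\min\beta(\overline{\Omega}) > 0$ and $D(W_{\beta}) = V$, one has $D(\Phi_{\alpha,\beta}) \subset V$, so the analysis never has to look outside of $V$.

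For (M1), suppose $\check{w}_{m} \to \check{w}$ weakly in $H$. I may assume $\ell := \varliminf_{m} \Phi_{\alpha, \beta}^{\varepsilon_{m}}(\check{w}_{m}) < \infty$, so along a subsequence $\check{w}_{m} \in V$ with $W_{\beta}(\check{w}_{m})$ bounded. The lower pointwise bound, combined with Fact~3, yields
\begin{equation*}
\Phi_{\alpha, \beta}^{\varepsilon_{m}}(\check{w}_{m}) \geq \int_{\Omega} \alpha |\partial_{x} \check{w}_{m}|\, dx + W_{\beta}(\check{w}_{m}) = V_{\alpha}(\check{w}_{m}) + W_{\beta}(\check{w}_{m}) = \Phi_{\alpha, \beta}(\check{w}_{m}).
\end{equation*}
Since $\Phi_{\alpha, \beta}$ is convex and l.s.c.\ on $H$, and hence weakly l.s.c., passing to the liminf gives $\ell \geq \Phi_{\alpha, \beta}(\check{w})$, which is the desired inequality.

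For (M2), given any $\hat{w} \in D(\Phi_{\alpha, \beta}) \subset V$, I would take the constant recovery sequence $\hat{w}_{m} := \hat{w}$. With $\alpha \in V \subset L^{\infty}(\Omega)$ and $\partial_{x}\hat{w} \in L^{2}(\Omega)$, the estimate $\alpha \sqrt{\varepsilon_{m}^{2} + |\partial_{x}\hat{w}|^{2}} \leq \alpha \varepsilon_{1} + \alpha|\partial_{x}\hat{w}|$ supplies an integrable dominant, so by Lebesgue's dominated convergence theorem the first integral in \eqref{phi.e} converges to $\int_{\Omega} \alpha |\partial_{x}\hat{w}|\, dx = V_{\alpha}(\hat{w})$, while the $W_{\beta}$-part is independent of $m$. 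Thus $\Phi_{\alpha, \beta}^{\varepsilon_{m}}(\hat{w}) \to \Phi_{\alpha, \beta}(\hat{w})$.

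No individual step stands out as a serious obstacle: the uniform upper envelope $\sqrt{\varepsilon^{2} + y^{2}} \leq \varepsilon + |y|$ makes the constant sequence a valid recovery sequence, so there is no need for a delicate diagonal construction through the approximating sequence of Fact~4. The only point requiring care is the identification $V_{\alpha} = \widetilde{V}_{\alpha}$ on $V$, supplied directly by Fact~3, and the fact that the weak $H$-limit of a sequence with bounded $\Phi_{\alpha,\beta}^{\varepsilon_{m}}$ inherits membership in $V$ from the $H^{1}$-bound forced by $\min \beta > 0$.
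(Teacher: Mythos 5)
Your proof is correct, and for condition (M2) it is genuinely simpler than the paper's. For (M1) you argue exactly as the paper does: the pointwise bound $\sqrt{\varepsilon^2+y^2}\geq |y|$ gives $\Phi_{\alpha,\beta}^{\varepsilon_m}\geq\Phi_{\alpha,\beta}$, and then the weak lower semi-continuity of the convex, l.s.c.\ functional $\Phi_{\alpha,\beta}$ finishes the job. For (M2), however, the paper constructs a recovery sequence by first approximating $\theta\in V$ with smooth functions $\varphi^k\in C^\infty(\overline{\Omega})$ and then running a diagonal argument through a carefully chosen subsequence $\{m_k\}$ satisfying \eqref{keylem05}; you instead observe that the constant sequence $\hat{w}_m:=\hat{w}$ already works, because
\begin{equation*}
0\leq \Phi_{\alpha,\beta}^{\varepsilon_m}(\hat{w})-\Phi_{\alpha,\beta}(\hat{w})=\int_\Omega \alpha\left(\sqrt{\varepsilon_m^2+|\partial_x\hat{w}|^2}-|\partial_x\hat{w}|\right)dx\leq \varepsilon_m\int_\Omega \alpha\,dx\to 0,
\end{equation*}
using $\sqrt{\varepsilon^2+y^2}-|y|\leq\varepsilon$ uniformly in $y$ (you invoke dominated convergence, but even that is not needed). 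The key structural point, which you correctly identify, is that $D(\Phi_{\alpha,\beta})=V$ because of the $W_\beta$-term, and $V_\alpha$ restricted to $V\subset W^{1,1}(\Omega)$ is the explicit integral $\widetilde{V}_\alpha$ by (Fact\,3); a diagonal construction would only become necessary if the effective domain were larger (e.g.\ all of $BV(\Omega)$), where one cannot evaluate the limit functional by a plain integral. Your route buys brevity; the paper's route is a template that would survive the removal of the regularizing $W_\beta$-term.
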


\begin{proof}
 First, we show the lower-bound condition (M1) in Definition \ref{Def.Mosco}. 
Let $\theta \in H$ and $\{ \theta^m\}_{m=1}^\infty \subset H$ be such that: 
\begin{align}\label{keylem01}
 \theta^m \to \theta \mbox{ weakly in } H, \mbox{ as } m \to \infty.
\end{align}
Then, it is sufficient to consider only the case when $\varliminf_{m \to \infty} \Phi_{\alpha, \beta}^{\varepsilon_m}(\theta^m) < \infty$, since the other case is trivial. 
So, by taking a subsequence $\{ m_k\}_{k=1}^\infty \subset \{m\}$, one can say that: 
\begin{align}\label{keylem02}
 \varliminf_{m \to \infty}\Phi_{\alpha, \beta}^{\varepsilon_m}(\theta^m) = \lim_{k \to \infty}\Phi_{\alpha, \beta}^{\varepsilon_{m_k}}(\theta^{m_k}) < \infty.
\end{align} 
With \eqref{phi.e}, \eqref{keylem01}, and \eqref{keylem02} in mind, we further see that: 
\begin{align}\label{keylem03}
 \partial_x \theta^{m_k} &\, \to \partial_x \theta \mbox{ weakly in } H, \nonumber \\
 \mbox{ and } \sqrt{\beta}\partial_x \theta^{m_k} &\, \to \sqrt{\beta} \partial_x \theta \mbox{ weakly in } H, \mbox{ as } k \to \infty,
\end{align}
by taking more one subsequence if necessary.
In the light of \eqref{f_eps}, \eqref{keylem01}--\eqref{keylem03}, Remark \ref{Rem.W_bt}, weakly lower semi-continuity of $\Phi_{\alpha, \beta}$, the lower-bound condition can be verified (M1), as follows: 
\begin{align*}
 \ds \varliminf_{k \to \infty}\Phi_{\alpha, \beta}^{\varepsilon_{m_k}}(\theta^{m_k}) &\, \geq \varliminf_{k \to \infty}\Phi_{\alpha, \beta}(\theta^{m_k}) \geq \Phi_{\alpha, \beta}(\theta).\\
\end{align*}

Next, we show the optimality condition (M2) in Definition\ref{Def.Mosco}. 
Let us fix any $\theta \in D(\Phi_{\alpha, \beta}) ( = V )$, and let us take a sequence $\{ \varphi^k\}_{k=1}^\infty \subset C^\infty(\overline{\Omega})$ such that:
\begin{align}\label{keylem04}
\varphi^k \to \theta \mbox{ in } V, \mbox{ and in the pointwise sense, a.e. in } \Omega, \mbox{ as } k \to \infty.
\end{align} 
By \eqref{keylem04} and Lebesgue's dominated convergence theorem, we can configure a sequence $\{ m_k\}_{k=0}^\infty \subset \mathbb{N}$ such that $1=: m_0 < m_1 < m_2 < \cdots < m_k \uparrow \infty$, as $k \to \infty$, and for any $k \in \mathbb{N} \cup\{0\}$, 
\begin{align}\label{keylem05}
 \ds \sup_{m \geq m_k}\bigl|f^{\varepsilon_m} (\partial_x \varphi^{k}) - |\partial_x \varphi^k|\bigr|_{L^1(\Omega)} < \frac{1}{2^k(|\alpha|_{L^\infty(\Omega)} + 1)}.
\end{align}
Based on these, let us define: 
\begin{equation}\label{keylem06}
 \theta^m := \left\{
\begin{array}{l}
\varphi^k \mbox{ if } m_k \leq m < m_{k+1}, \mbox{ for } k \in \mathbb{N}, \\[1mm]
\varphi^1 \mbox{ if } 1 \leq m < m_1, 
\end{array}
\mbox{ for any } m \in \mathbb{N}. \right.
\end{equation}
Taking into account \eqref{keylem04}--\eqref{keylem06} and H\"{o}lder's inequality, we obtain that:
\begin{align*}%\label{keylem07}
 \ds \bigl|&\, \Phi_{\alpha, \beta}^{\varepsilon_m}(\theta^m) - \Phi_{\alpha, \beta}(\theta)\bigr| \\
           &\, \leq \left| \int_\Omega \bigl(\alpha f^{\varepsilon_m}(\partial_x \theta^m) - \alpha |\partial_x \theta|\bigr) \, dx\right| + \frac{1}{2} \int_\Omega \beta\bigl| |\partial_x \theta^m|^2 - |\partial_x \theta|^2 \bigr| \, dx\\
           &\, \leq |\alpha|_{L^\infty(\Omega)}\left( \int_\Omega \sup_{m \geq m_k}\bigl| f^{\varepsilon_m}(\partial_x \varphi^k) - |\partial_x \varphi^k|\bigr| dx + \int_{\Omega}\bigl| |\partial_x \varphi^k|  - |\partial_x \theta|\bigr| \, dx\right)\\
           &\, \qquad + \frac{|\beta|_{L^\infty(\Omega)}}{2}|\varphi^k - \theta|_V  \left( \int_\Omega 2(|\partial_x \varphi^k|^2 + |\partial_x \theta|^2)\, dx\right)^{\frac{1}{2}}  \\
           &\, \leq \frac{1}{2^{k}} + |\varphi^k - \theta|_V \cdot 
           \\
           &\, \qquad \cdot \left(\sqrt{2L}|\alpha|_{L^\infty(\Omega)} + \frac{|\beta|_{L^\infty(\Omega)}}{2} \left( \int_\Omega 2(|\partial_x \varphi^k|^2 + |\partial_x \theta|^2)\, dx\right)^{\frac{1}{2}}\right),  \\
           &\, \mbox{ for any } k \in \mathbb{N} \cup \{ 0\} \mbox{ and any } m \geq m_k,
\end{align*}
and therefore, 
\begin{align*}
 \Phi_{\alpha, \beta}^{\varepsilon_m}(\theta^m) \to \Phi_{\alpha, \beta}(\theta), \mbox{ as } m \to \infty. 
\end{align*}  

Thus, we conclude this lemma.
\end{proof}
%\medskip

\section{Proof of Main Theorem} 

In this Section, we give the proof of Main Theorem. 
Let us define a set-valued map $\mathcal{A}^0 :D(\mathcal{A}^0) \subset H \longrightarrow 2^H$, by putting:
\begin{equation}\label{A.0}
 D(\mathcal{A}^0) := \left\{\begin{array}{l|ll} \theta \in V & \begin{array}{lll}\multicolumn{2}{l}{\mbox{ there exists } \varpi^*\in L^\infty(\Omega) \mbox{ such that } } 
\\[0.25ex]
& \bullet ~ \varpi^* \in \Sgn(\partial_x \theta) \mbox{ a.e. in } \Omega
\\[0.25ex]
& \bullet ~ \alpha \varpi^* + \beta \partial_x \theta \in H_0^1(\Omega)  \end{array}
\end{array}\right\},
\end{equation}
and
\begin{align}\label{A.0f}
 \theta &\, \in D(\mathcal{A}^0) \subset H \nonumber
 \\
   &\, \mapsto \mathcal{A}^0 \theta := \left\{\begin{array}{l|l} \theta^* \in H & \begin{array}{lll}\multicolumn{2}{l}{ \theta^* = -\partial_x \bigl( \alpha\varpi^* + \beta \partial_x \theta\bigr) \mbox{ in } H, }
   \\[0.25ex]
   & \mbox{for some } \varpi^*\in L^\infty(\Omega), \mbox{ satisfying } 
   \\[0.25ex]
   & \varpi^*\in \Sgn(\partial_x \theta) \mbox{ a.e. in } \Omega \end{array}
 \end{array} \right\}.
\end{align}
\medskip
 
We prove Main Theorem in accordance with the following two Steps.
    \begin{description}
        \item[\textbf{\boldmath Step\,$1$:}]$\mathcal{A}^0 = \partial \Phi_{\alpha, \beta}$ in $H \times H$.
        \item[\textbf{\boldmath Step\,$2$:}]$\partial \Phi_{\alpha, \beta} = \partial V_\alpha + \partial W_\beta $ in $H \times H$. 
    \end{description}

\noindent
\textbf{\boldmath\underline{Verification of Step\,$1$.}}

First, we show $\mathcal{A}^0 \subset \Phi_{\alpha, \beta}$ in $H \times H$. 
Let us assume $\theta \in D(\mathcal{A}^0)$ and $\theta^* \in \mathcal{A}^0 \theta$. 
Then, by \eqref{A.0f}, there exists $\varpi^* \in L^\infty(\Omega)$ such that: 
\begin{align}\label{claim3-01}
 \varpi^* \in \Sgn(\partial_x \theta) \mbox{ a.e. in } \Omega \mbox{ and } \theta^* = -\partial_x(\alpha \varpi^* + \beta\partial_x \theta) \mbox{ in } H.
\end{align} 
From Remark \ref{Rem.V_alp}, \eqref{Sgn^d}, \eqref{claim3-01}, and Young's inequality, we can compute that: 
\begin{align*}
 \ds (\theta^*, \varphi - \theta)_H &\, = \bigl( -\partial_x \bigl( \alpha\varpi^* + \beta \partial_x \theta\bigr), \varphi - \theta \bigr)_H 
 \\
                      &\, = \int_\Omega \alpha\varpi^* \partial_x (\varphi - \theta)\, dx + \int_\Omega  \beta\, \partial_x \theta\, \partial_x (\varphi - \theta)\, dx 
                      \\
                      &\, \leq \int_{\Omega}\alpha\bigl( |\partial_x \varphi| - |\partial_x \theta|\bigr)\, dx + \frac{1}{2} \int_\Omega \beta(|\partial_x \varphi|^2 - |\partial_x \theta|^2 )\, dx 
                      \\
                      &\, = \Phi_{\alpha, \beta}(\varphi) - \Phi_{\alpha, \beta}(\theta), \mbox{ for any } \varphi \in V.                     
\end{align*}
This implies that: 
\begin{align*}
 \theta \in D(\partial \Phi_{\alpha, \beta}) \mbox{ and } \theta^* \in \partial\Phi_{\alpha, \beta}(\theta) \mbox{ in } H.
\end{align*}
Thus, the inclusion $ \mathcal{A}^0 \subset \partial \Phi_{\alpha, \beta} \mbox{ in } H \times H $ is verified.
\medskip

Next, we prove the equality $(\mathcal{A}^0 + I_H)H = H$.
Since, the inclusion $(\mathcal{A}^0 + I_H)H \subset H$ is trivial, it is sufficient to prove the converse inclusion. 

Let us take any $h \in H$.
Then, by Remark \ref{lem.op.ap} and Minty's theorem (cf. \cite[Theorem 2.2]{MR2582280}), we can configure a class of function $\{ \theta^\varepsilon \}_{\varepsilon> 0} \subset V$, by setting $
 \{ \theta^\varepsilon := (\mathcal{A}^\varepsilon + I_H)^{-1} h \}_{\varepsilon > 0} \mbox{ in } H, $
i.e.
\begin{align}\label{claim4-01}
 h - \theta^\varepsilon = \mathcal{A}^\varepsilon \theta^\varepsilon = \partial \Phi_{\alpha, \beta}^\varepsilon(\theta^\varepsilon) \mbox{ in } H, \mbox{ for any } \varepsilon > 0,
\end{align}
so that: 
\begin{align}\label{claim4-02}
 \int_\Omega \bigl( \alpha(f^\varepsilon)'(\partial_x \theta^\varepsilon) &\, + \beta \partial_x \theta^\varepsilon\bigr)\partial_x \varphi\, dx + \int_\Omega \theta^\varepsilon\varphi \, dx \nonumber
 \\
 &\, = \int_\Omega h\varphi\, dx, \mbox{ for any }\varphi \in V, \mbox{ and  any } \varepsilon > 0.
\end{align}
In the variational form \eqref{claim4-02}, let us put $\varphi = \theta^\varepsilon$. 
Then, with \eqref{f_eps} and Young's inequality in mind, we deduce that: 
\begin{align}\label{claim4-03}
 \ds \frac{1}{2}|\theta^\varepsilon|_H^2 + |\sqrt{\beta}\partial_x \theta^\varepsilon|_H^2 \leq \frac{1}{2}|h|_H^2, \mbox{ for any } \varepsilon > 0.
\end{align}
%\begin{align}\label{claim4-04}
% \ds |\theta^\varepsilon|_V^2 \leq \frac{1}{(1 \wedge 2\nu^2)}|h|_H^2, \mbox{ for any } \varepsilon \in (0, 1].
%\end{align}  
The above \eqref{claim4-03} enable us to take a function $\theta \in V$ and a sequence $\varepsilon_1 > \varepsilon_2 > \varepsilon_3 > \cdots > \varepsilon_m \downarrow 0$, as $m \to \infty$, such that:
\begin{align}\label{claim4-05}
 \theta^{\varepsilon_m} &\, \to \theta \mbox{ in } H, \mbox{ weakly in } V, \nonumber
 \\
  \mbox{ and } \sqrt{\beta} \partial_x \theta^{\varepsilon_m} &\, \to \sqrt{\beta} \partial_x \theta \mbox{ weakly in } H, \mbox{ as } m \to \infty.
\end{align}
In the light of Lemma \ref{Mosco.psi}, \eqref{claim4-01}, \eqref{claim4-05}, and (Fact\,1), it follows that: 
\begin{align}\label{claim4-06}
 h - \theta \in \partial \Phi_{\alpha, \beta}(\theta) \mbox{ in } H, \mbox{ and } \Phi_{\alpha, \beta}^{\varepsilon_m}(\theta^{\varepsilon_m}) \to \Phi_{\alpha, \beta}(\theta), \mbox{ as } m \to \infty.
\end{align}
Also, by Remark \ref{Rem.V_alp}, \eqref{f_eps}, \eqref{claim4-05}, \eqref{claim4-06}, and weakly lower semi-continuity of the norm $|\cdot|_H$, we can compute that: 
\begin{align}\label{claim4-07}
 \frac{1}{2}\int_{\Omega} \beta|\partial_x \theta|^2 \, dx & \leq \frac{1}{2}\varliminf_{m \to \infty}\int_{\Omega}\beta|\partial_x \theta^{\varepsilon_m}|^2 \, dx \leq \frac{1}{2}\varlimsup_{m \to \infty}\int_{\Omega} \beta|\partial_x \theta^{\varepsilon_m}|^2 \, dx \nonumber
 \\
 & \leq \lim_{m \to \infty}\Phi_{\alpha, \beta}^{\varepsilon_m}(\theta^{\varepsilon_m}) - \varliminf_{m \to \infty}\int_\Omega \alpha f^{\varepsilon_m}(\partial_x \theta^{\varepsilon_m})\, dx \nonumber
 \\
 & \leq \Phi_{\alpha, \beta}(\theta) - \int_\Omega \alpha|\partial_x \theta|\, dx = \frac{1}{2}\int_{\Omega}\beta |\partial_x \theta|^2\, dx.
\end{align}
Having in mind \eqref{claim4-05}, \eqref{claim4-07}, and the uniform convexity of $L^2$-based topologies, it is deduce that:
\begin{equation}\label{claim4-08}
\sqrt{\beta}\partial_x \theta^{\varepsilon_m} \to \sqrt{\beta}\partial_x \theta \mbox{ in } H, \mbox{ as } m \to \infty.
\end{equation}
Furthermore, by \eqref{ass01}, \eqref{claim4-05}, and \eqref{claim4-08}, we obtain that:
\begin{equation}\label{claim4-08-1}
 \theta^{\varepsilon_m} \to \theta \mbox{ in } V, \mbox{ and } \partial_x  \theta^{\varepsilon_m} \to \partial_x \theta \mbox{ in } H,
    \mbox{ as } m \to \infty.
\end{equation}

In the meantime, by Example \ref{exConvex}, $
 \ds |(f^{\varepsilon_m})'(\partial_x \theta^{\varepsilon_m})| \leq 1 \mbox{ a.e. in } \Omega, \mbox{ for any } m \in \mathbb{N},$
and one can say 
\begin{align}\label{claim4-10}
 (f^{\varepsilon_m})'(\partial_x \theta^{\varepsilon_m}) \to \varpi^* &\, \mbox{ weakly-$*$ in } L^\infty(\Omega), \mbox{ as } m \to \infty, \nonumber 
 \\
 &\, \mbox{ for some } \varpi^* \in L^\infty(\Omega), 
\end{align}
by taking a subsequence if necessary.

From \eqref{Sgn^d}, \eqref{claim4-08-1}, \eqref{claim4-10}, Example \ref{Rem.ExMG}, (Fact\,1), and \cite[Proposition 2.16]{MR0348562}, it is inferred that: 
\begin{align}\label{claim4-11}
 \varpi^* \in \Sgn(\partial_x \theta) \mbox{ a.e. in } \Omega. 
\end{align}
On account of \eqref{claim4-08}--\eqref{claim4-10}, letting $m \to \infty$ in \eqref{claim4-02} yields that: 
\begin{align}\label{claim4-12}
 \int_\Omega \bigl( \alpha\varpi^*  + \beta \partial_x \theta\bigr)\partial_x \varphi\, dx + \int_\Omega \theta \varphi \, dx = \int_\Omega h\varphi\, dx, \mbox{ for any }\varphi \in V.
\end{align}
In particular, putting $\varphi = \varphi_0 \in H^1_0(\Omega)$ in \eqref{claim4-12}, we have: 
\begin{align*}%\label{claim4-12-1}
 (h - \theta, \varphi_0)_H = \int_{\Omega}\bigl( \alpha \varpi^* + \beta \partial_x \theta, \partial_x \varphi_0\bigr) \, dx, \mbox{ for any } \varphi_0 \in H_0^1(\Omega), 
\end{align*} 
which implies: 
\begin{align}\label{claim4-12-2}
 -\partial_x \bigl( \alpha \varpi^* + \beta \partial_x \theta \bigr) = h - \theta \in H, \mbox{ in } \mathscr{D}'(\Omega).
\end{align}
In addition, we observe that: 
\begin{align}\label{claim4-12-3}
    \displaystyle &\, \Bigl( \alpha \varpi^* + \beta \partial_x \theta, \psi \Bigr)_{H_\Gamma}  = \left[ \rule{-1pt}{12pt} \bigl(\alpha(x) \varpi^*(x) + \beta(x)\partial_x \theta(x)\bigr)\psi(x) \right]_{-L}^{L} \nonumber 
\\
&\, ~ = \int_\Omega \partial_x \bigl( \bigl(\alpha \varpi^* + \beta \partial_x \theta  \bigr) [\psi]^{\mathrm{ex}}\bigr) \, dx \nonumber
\\
&\, ~ = - \int_\Omega(h - \theta)[\psi]^{\mathrm{ex}} \, dx + \int_\Omega \bigl(\alpha \varpi^* + \beta \partial_x \theta \bigr)\partial_x [\psi]^{\mathrm{ex}}\, dx \nonumber
\\
&\, ~ = 0, \mbox{ for any } \psi \in H_\Gamma \mbox{ with any extension } [\psi]^{\mathrm{ex}} \in V.
\end{align}
\eqref{claim4-12-2} and \eqref{claim4-12-3} lead to: 
\begin{align}\label{claim4-13}
 \alpha \varpi^* + \beta \partial_x \theta \in H_{0}^{1}(\Omega). 
\end{align} 
As a consequence of \eqref{A.0}, \eqref{A.0f}, \eqref{claim4-11}, and \eqref{claim4-13}, we obtain that: 
\begin{align*}%\label{claim4-14}
 (\mathcal{A}^0 + I_H) \theta = h \mbox{ in } H, \mbox{ i.e. } h \in (\mathcal{A}^0 + I_H)H,
\end{align*}
and we verify $H \subset (\mathcal{A}^0 + I_H)H$.

Finally, the inclusion $\mathcal{A}^0 \subset \partial \Phi_{\alpha, \beta}$ in $H \times H$, and the equality $(\mathcal{A}^0 + I_H)H = H$ enable us to apply Minty's theorem (cf. \cite[Theorem 2.2]{MR2582280}), 
and to verify that $\mathcal{A}^0$ is a maximal monotone. Moreover, the inclusion $\mathcal{A}^0 \subset \partial \Phi_{\alpha, \beta}$ and the maximality of $\mathcal{A}^0$ will lead to the coincidence $\mathcal{A}^0 = \partial \Phi_{\alpha, \beta}$ in $H \times H$. 

Thus we finish the proof of Step\,1.
\bigskip

\noindent
\textbf{\boldmath\underline{Verification of Step\,$2$.}}

By the general theory of the convex analysis \cite[Chapter 1]{MR1727362}, we immediately have $
\partial\Phi_{\alpha, \beta} \supset \partial V_\alpha + \partial W_\beta \mbox{ in } H \times H. $
So, we prove the converse inclusion:
\begin{align}\label{step2-00}
 \partial\Phi_{\alpha, \beta} \subset \partial V_\alpha + \partial W_\beta \mbox{ in } H \times H.
\end{align}

Let us take any $[\theta, \theta^*] \in \partial \Phi_{\alpha, \beta}$ in $H \times H$, and apply the result of previous Step\,1, to have a function $\varpi^* \in L^\infty(\Omega)$ as in \eqref{claim3-01}. 
On this basis, we verify this Step\,2, via the verifications of four Claims. 
\medskip

\noindent
\textbf{\boldmath\underline{Claim\,$\sharp 1)$.}}
$\theta \in H^2(\Omega)$ and $\partial_x \theta \in H_0^1(\Omega)$. 

For every $a \geq 0$ and $b > 0$, let $\rho_{(a, b)}: \mathbb{R} \longrightarrow 2^\mathbb{R}$ be a set-valued function, defined as:
\begin{align}\label{claim5-01}
 \rho_{(a, b)}(r) :=  a \Sgn(r) + b r \subset \mathbb{R}, \mbox{ for any } r \in \mathbb{R},
\end{align}
and let $\rho_{(a, b)}^{*}$ be the inverse of $\rho_{(a, b)}$. 
Then, as is easily checked from \eqref{Sgn^d} and \eqref{claim5-01}, 
\begin{align}\label{claim5-02}
 \displaystyle \rho_{(a, b)}^{*} &\, : r \in \mathbb{R} \mapsto \frac{[ r - a]^+ - [r + a]^-}{b} \in \mathbb{R},
\end{align}
i.e. $(\rho_{(a, b)})^{*}$ is a single-valued Lipschitz function, such that
\begin{align*}
\displaystyle 0 \leq (\rho_{(a, b)}^{*})' \leq \frac{1}{b} \mbox{ on } \mathbb{R}, \mbox{ for every } a \geq 0 \mbox{ and } b > 0.
\end{align*}
Here, from \eqref{claim5-01}, \eqref{claim5-02}, and Step\,1, we immediately see that: 
\begin{align}\label{claim5-03}
 \tilde{\theta} := &\, \rho_{(\alpha(\cdot), \beta(\cdot))}(\partial_x \theta) = \alpha \varpi^* + \beta \partial_x \theta \in H_0^1(\Omega), \mbox{ and } \theta^* = -\partial_x \tilde{\theta} \mbox{ in } H.
\end{align}
Therefore, having in mind \eqref{claim5-02} and \eqref{claim5-03}, and applying the generalized chain rule in BV-theory \cite[Theorem 3.99]{MR1857292}, it is inferred that:
\begin{align*}
 \displaystyle \partial_x \theta &\, = (\rho_{(\alpha(\cdot), \beta(\cdot))}^{*})( \tilde{\theta}) = \frac{[\tilde{\theta} -\alpha ]^+ - [\tilde{\theta} + \alpha]^-}{\beta} \in H_0^1(\Omega),
\end{align*} 
\vspace{-3.0ex}
\begin{align*}%\label{claim5-07}
 \displaystyle \partial_x^2 \theta = &\, \partial_x \left[\rule{-1pt}{12pt} \frac{[\tilde{\theta} -\alpha ]^+ - [\tilde{\theta} + \alpha]^-}{\beta} \right] 
 \\ 
 = &\, \frac{1}{\beta}\bigl[\rule{-1pt}{12pt} \chi_{(\alpha(\cdot), \infty)} + \chi_{(-\infty, -\alpha(\cdot))}\bigr](\tilde{\theta})\partial_x \tilde{\theta} - \frac{\partial_x \beta}{\beta^2}\bigl([\tilde{\theta} -\alpha ]^+ - [\tilde{\theta} + \alpha]^- \bigr) \in H.
\end{align*} 

Thus, Claim\,$\sharp 1)$ is verified.
\medskip

\noindent
\textbf{\boldmath\underline{Claim\,$\sharp 2)$.}}
$\beta \partial_x \theta \in H_0^1(\Omega)$ and $[\theta, -\partial_x(\beta \partial_x \theta)] \in \partial W_\beta$ in $H \times H$. 

This Claim\,$\sharp 2)$ is immediately observed from Claim$\sharp 1)$ and Remark \ref{Rem.W_bt}. 
\medskip

\noindent
\textbf{\boldmath\underline{Claim\,$\sharp 3)$.}}
$\alpha \varpi^* \in H_0^1(\Omega)$ and $[\theta, -\partial_x(\alpha \varpi^*)] \in \partial V_\alpha$ in $H \times H$.

By using \eqref{claim5-03}, Claim$\sharp 2)$, and the integration by part, we can observe that: 
\begin{align}\label{claim5-11}
 \alpha \varpi^* = \tilde{\theta} - \beta \partial_x \theta \in H_0^1(\Omega),
\end{align}
and
\begin{align}\label{claim5-12}
\displaystyle \int_\Omega &\, -\partial_x (\alpha \varpi^*)(\varphi - \theta)\, dx  = \int_\Omega \alpha \varpi^* \partial_x (\varphi - \theta)\, dx \nonumber 
\\
 &\, \leq \int_\Omega \alpha|\partial_x \varphi|\, dx - \int_\Omega \alpha |\partial_x \theta|\, dx, \mbox{ for any } \varphi \in W^{1,1}(\Omega). 
\end{align}

Next, let us take any $z \in D(V_\alpha)$, and invoke (Fact\,4) to take a sequence $\{\varphi_i \}_{i=1}^\infty \subset W^{1,1}(\Omega)$ such that: 
\begin{equation}\label{claim5-13} 
  \displaystyle \varphi_i \to z \mbox{ in } H, \mbox{ and } \widetilde{V}_\alpha(\varphi_i) \left( = \int_\Omega \alpha|\partial_x \varphi_i|\, dx \right) \to V_\alpha(z), 
 \mbox{ as } i \to \infty.
\end{equation}
Besides, putting $\varphi = \varphi_i$ in \eqref{claim5-12}, with $i \in \mathbb{N}$, and using \eqref{claim5-13}, we deduce that
\begin{align}\label{claim5-14}
 (-\partial_x(\alpha \varpi^*),&\, z - \theta )_H + V_\alpha(\theta) \nonumber 
 \\
 &\, = \lim_{i\to \infty} \int_\Omega -\partial_x (\alpha \varpi^*)(\varphi_i - \theta) \, dx + V_\alpha(\theta) \nonumber
 \\
 &\, \leq \lim_{i \to \infty} \widetilde{V}_{\alpha}(\varphi_i) = V_\alpha(z), \mbox{ for any } z \in D(V_\alpha).
\end{align}

\eqref{claim5-11} and \eqref{claim5-14} finish the verification of Claim\,$\sharp 3)$.
\medskip

\noindent
\textbf{\boldmath\underline{Claim\,$\sharp 4)$.}}
$\theta^* \in \partial V_\alpha(\theta) + \partial W_\beta(\theta)$ in $H$.

This Claim\,$\sharp 4)$ will be a straight forward consequence of \eqref{A.0f}, Step\,1, Claim\,$\sharp 1)$--Claim\,$\sharp 3)$, and the linearity of distributional differential:
\begin{align*}
 \theta^* &\, = -\partial_x(\alpha \varpi^* + \beta \partial_x \theta) = -\partial_x(\alpha \varpi^*) - \partial_x(\beta \partial_x \theta) \mbox{ in } \mathscr{D}'(\Omega).
\end{align*}

Claim\,$\sharp 1)$--Claim\,$\sharp 4)$ enable us to verify the inclusion \eqref{step2-00}, and to complete the proof of Main Theorem.
\qed

\section{Conclusion}
In this paper, the regularized total variation functional with nonhomogeneous coefficients is considered, and it is concluded that the subdifferential of this functional is decomposed to the sum of a weighted singular diffusion and a weighted linear diffusion. The result, stated in the Main Theorem, is to guarantee the $ H^2 $-regularity of the nonhomogeneously weighted quasilinear equations. The novelty of this work is in the point that the result is obtained by means of the approximation based on Mosco-convergence, and the generalized chain rule in BV-theory \cite[Theorem 3.99]{MR1857292}. Indeed, the mathematical method adopted here is different with the traditional approach based on the PDE-theory (cf. \cite{MR0241822}), and also, it would be a simple method to extend the result of the previous work \cite{MR3144071}.

%Furthermore, the enhanced theory can be expected to bring useful information for some advanced problems that require smoothness of the system while including singularity, such as the optimal control problem governed by the KWC type model.

%\selectlanguage{russian}
%V naqale bylo Slovo, i Slovo bylo u Boga, i Slovo bylo Bog.
%Ono bylo v naqale u Boga. Vse qrez Nego n\'aqalo byt{p1},
%i bez Nego niqto ne n\'aqalo byt{p1}, qto n\'aqalo byt{p1}.
%V Nem byla {zh}izn{p1}, i {zh}izn{p1} byla svet qelovekov.
%I svet vo t{p1}me svetit, i t{p1}ma ne ob{p2}{ya}la ego.
%\CYRB V
%\section{\cyrb v}
%\selectlanguage{english}

    \textbf{Shodai Kubota}, ph.D student, Department of Mathematics and Informatics, Graduate School of Science and Engineering, Chiba University, 1--33, Yayoi-cho, Inage-ku, 263--8522, Chiba, Japan,\\ tel: +81 (0) 43 290 2665, email: skubota@chiba-u.jp, ORCID iD: 0000-0002-9495-0837

%%%%%%%%%%%%%%%%%%%%%%%%%%%%%%%%%%%%%%%%%%%%%%%%%%%%%%%%%%%%%%%%%%%%%%%%%%%%%%%%%%%%%%%%%%%%%%%%%%%%
\end{document}